\newcommand{\PF}{\mathrm{PF}}
\newcommand{\SG}{\mathrm{SG}}
\newcommand{\g}{\mathrm{g}}
\newcommand{\F}{\mathrm{F}}
\newcommand{\G}{\mathrm{G}}
\newcommand{\m}{\mathrm{m}}
\newcommand{\e}{\mathrm{e}}
\newcommand{\Ap}{\mathrm{Ap}}
\newcommand{\Z}{\mathbb{Z}}
\newcommand{\N}{\mathbb{N}}
\newcommand{\C}{\mathrm{C}}
\newcommand{\I}{\mathcal{I}}
\newcommand{\Kunz}{\mathcal{K}}
\newcommand{\dsum}{\displaystyle\sum}
\newtheorem{theo}{Theorem}
\newtheorem{ex}[theo]{Example}
\newtheorem{lem}[theo]{Lemma}
\newtheorem{cor}[theo]{Corollary}
\newtheorem{prop}[theo]{Proposition}
\begin{document}

\title{The tree of irreducible numerical semigroups with fixed Frobenius number}

\author[V. Blanco and J.C. Rosales]{V. Blanco and J.C. Rosales\medskip\\
D\lowercase{epartamento de } \'A\lowercase{lgebra}, U\lowercase{niversidad de } G\lowercase{ranada}\medskip\\
\texttt{\lowercase{vblanco@ugr.es}} \; \texttt{\lowercase{jrosales@ugr.es}}}

\address{Departamento de \'Algebra, Universidad de Granada}

\begin{abstract}
In this paper we present a procedure to build the set of irreducible numerical semigroups with a fixed Frobenius number. The construction gives us a rooted tree structure for this set. Furthermore, by using the notion of Kunz-coordinates vector we translate the problem of finding such a tree into the problem of manipulating $0-1$ vectors with as many component as the Frobenius number. 
\end{abstract}

\maketitle

\section{Introduction}
A numerical semigroup is a subset $S$ of $\N$ (here $\N$ denotes the set of nonnegative integers) closed under
addition, containing zero and such that $\N\backslash S$ is finite. The largest integer not belonging to $S$ is called the \textit{Frobenius number} of $S$ and we denote it by $\F(S)$.

A numerical semigroup is \textit{irreducible} if it cannot be expressed as an intersection of two numerical semigroups containing it properly. This notion was introduced in \cite{pacific} where
it is also proven that a numerical semigroup is irreducible if and only if $S$ is maximal (with respect to the inclusion ordering) in the set of numerical semigroups with Frobenius number $\F(S)$.  From \cite{barucci} and \cite{froberg} it is also deduced in \cite{pacific} that the family of irreducible numerical semigroups is the union of two families of numerical semigroups with special importance in this theory: symmetric and pseudo-symmetric numerical semigroups.

For a given positive integer $F$, we denote by $\I(F)$ the set of irreducible numerical semigroups with Frobenius number $F$. The main goal of this paper is to show how to build, iteratively, all the elements in $\I(F)$. Furthermore, such a construction will be represented as a rooted tree.

For the sake of simplicity of the procedure we use the notion of \emph{Kunz-coordinates vector}, previously introduced in \cite{bp2011}, to encode the numerical semigroups as $0-1$ vectors with as many coordinates as its Frobenius number. The equivalence between the numerical coordinate and its Kunz-coordinates vector allows us to translate the construction of the tree of irreducible numerical semigroups with fixed Frobenius number into an easy procedure by manipulating $0-1$ vectors.

The paper is organized as follows. We present the construction of the tree of numerical semigroups with fixed Frobenius number in Section \ref{sec:1}. In Section \ref{sec:2} we give the notion of Kunz-coordinates vector of a numerical semigroups and translate the problem of building the tree into the problem of finding adequate $0-1$ vectors. 
 It leads us to an efficient procedure to compute the set of irreducible numerical semigroups with Frobenius number $F$ by swapping elements in a vector with components in $\{0,1\}$.

\section{The tree of irreducible numerical semigroups with Frobenius number $F$}
\label{sec:1}
If $A$ is a nonempty subset in $\N$, we denote by $\langle A \rangle$ the submonoid of $(\N, +)$ generated by $A$, that is $\langle A \rangle = \{\lambda_1a_1 + \cdots + \lambda_na_n: n\in \N \backslash \{0\}, a_1, \ldots, a_n \in A, \text{ and } \lambda_1, \ldots, \lambda_n \in \N\}$. It is well-known that $\langle A \rangle$ is a numerical semigroup if and only if $\gcd(A)=1$. If $S$ is a numerical semigroup and $S=\langle A \rangle$, then we say that $A$ is a system of generators of $S$. In \cite{springer} it is shown that every numerical semigroup admits an unique minimal system of generators and that such a system is finite.

The following result has an immediate proof.

\begin{lem}
\label{lem:1}
 Let $S$ be a numerical semigroup and $x$ a minimal generator of $S$. Then $S\backslash \{x\}$ is a numerical semigroups.
\end{lem}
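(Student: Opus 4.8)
The plan is to verify directly the three defining properties of a numerical semigroup for the set $T = S\backslash\{x\}$: it contains $0$, its complement in $\N$ is finite, and it is closed under addition. The first two are essentially immediate, so the real content lies in checking closure under addition, and this is where the hypothesis that $x$ is a \emph{minimal} generator will be used.

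First I would note that since $x$ is a minimal generator of $S$, it cannot be $0$ (zero never belongs to the minimal system of generators), so $0\in T$. Next, observe that $\N\backslash T = (\N\backslash S)\cup\{x\}$, which is a finite set because $\N\backslash S$ is finite; hence $T$ has finite complement in $\N$.

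For closure, I would take arbitrary $a,b\in T$ and show $a+b\in T$. Clearly $a+b\in S$ since $S$ is a semigroup, so it only remains to rule out $a+b=x$. If either $a$ or $b$ is $0$, say $b=0$, then $a+b=a\in T$, so $a+b\neq x$. Otherwise $a$ and $b$ are both nonzero elements of $S$, and $x=a+b$ would exhibit $x$ as a sum of two nonzero elements of $S$, contradicting the fact that $x$ lies in the minimal system of generators of $S$. Therefore $a+b\neq x$, so $a+b\in T$, and $T$ is a numerical semigroup.

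I do not anticipate a genuine obstacle here; the only point requiring a little care is the case distinction in the closure argument (handling the possibility that one of the summands is $0$), together with recalling the standard fact that an element of the minimal generating system cannot be written as a sum of two nonzero elements of the semigroup.
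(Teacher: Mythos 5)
Your proof is correct and is just the standard verification that the paper itself deems immediate (the paper offers no written proof, stating the result "has an immediate proof"): the only real point is that a minimal generator cannot be a sum of two nonzero elements of $S$, which is exactly what you use to get closure under addition. Nothing further is needed.
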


Let $S$ be a numerical semigroup. Following the notation in \cite{jpaa}, we say that an integer $x\in \Z\backslash S$ is a pseudo-Frobenius number of $S$ if $x+s \in S$ for all $s\in S\backslash \{0\}$. We denote by $\PF(S)$ the set of pseudo-Frobenius numbers of $S$.

From lemmas 20 and 24 in \cite{simetricos}, lemma 27 in \cite{pseudo-simetricos} and the fact that a numerical semigroup is irreducible if and only if it is symmetric or pseudo-symmetric, we get the following result.

\begin{lem}
\label{lem:2}
 Let $S$ be an irreducible numerical semigroup and $x$ a minimal generator of $S$ such that $x < \F(S)$. Then, $\F(S)-x \in  \PF(S\backslash \{x\})$ if and only if $2x-\F(S) \not\in S$.
\end{lem}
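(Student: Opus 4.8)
The plan is to work throughout with $T:=S\backslash\{x\}$, which is a numerical semigroup by Lemma~\ref{lem:1}, and to abbreviate $F=\F(S)$. A first routine observation is that $F-x\notin S$: otherwise $F=(F-x)+x\in S$, contradicting $F=\F(S)$. In particular $F-x$ is a gap of $S$, hence of $T$, so the condition $F-x\in\PF(T)$ amounts exactly to the single requirement that $(F-x)+t\in T$ for every $t\in T\backslash\{0\}$. I would also record the translation of "$x$ is a minimal generator'': $x$ cannot be written as a sum of two positive elements of $S$.

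For the implication $F-x\in\PF(T)\ \Rightarrow\ 2x-F\notin S$ I would argue by contradiction and note that this direction needs nothing about irreducibility. If $2x-F\in S$, then $2x-F\ge 0$; moreover $2x-F\neq 0$ (since $2x\in S$ but $F\notin S$) and $2x-F\neq x$ (since $x<F$), so $2x-F\in T\backslash\{0\}$. Applying the pseudo-Frobenius property of $F-x$ to the element $2x-F$ gives $(F-x)+(2x-F)=x\in T$, contradicting $x\notin T$.

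For the converse, assume $2x-F\notin S$ and fix $t\in T\backslash\{0\}$, i.e.\ $t\in S$, $t>0$, $t\neq x$; the goal is $(F-x)+t\in T$. That $(F-x)+t\neq x$ is immediate, since $(F-x)+t=x$ would force $t=2x-F\in S$. For $(F-x)+t\in S$ I would split on the size of $t$: if $t>x$ then $(F-x)+t\ge F+1$, hence lies in $S$; if $t<x$ then $x-t$ is a positive integer, and $x-t\notin S$ because otherwise $x=(x-t)+t$ would contradict minimality of the generator $x$. This is where irreducibility enters: $S$ is symmetric or pseudo-symmetric, so from $x-t\notin S$ one concludes $F-(x-t)=(F-x)+t\in S$. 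In the symmetric case this is the defining symmetry $z\in S\iff F-z\notin S$; in the pseudo-symmetric case it is the same equivalence for $z\neq F/2$, so I must first exclude $(F-x)+t=F/2$ — but that would give $x-t=F/2$, hence $2x-F=2t\in S$, contradicting the standing assumption.

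The main obstacle I anticipate is the pseudo-symmetric bookkeeping: the near-symmetry $z\in S\iff F-z\notin S$ fails precisely at $z=F/2$, so one must verify that this exceptional value does not occur in the argument above; everything else is a direct check once $T$ has been identified via Lemma~\ref{lem:1} and the minimal-generator hypothesis has been phrased as "$x$ is not a sum of two positive elements of $S$''.
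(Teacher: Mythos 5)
Your argument is correct: both directions check out, including the delicate points (that $F-x\notin S$ so the pseudo-Frobenius condition reduces to the additive requirement, that $2x-\F(S)\in T\backslash\{0\}$ in the forward direction, and the exclusion of the exceptional value $\frac{\F(S)}{2}$ in the pseudo-symmetric case via $2x-\F(S)=2t\in S$). The route, however, is genuinely different from the paper's: the paper does not prove Lemma~\ref{lem:2} at all, but derives it by citing lemmas 20 and 24 of the reference on semigroups differing from a symmetric one in one element and lemma 27 of the pseudo-symmetric analogue, combined with the fact that irreducible means symmetric or pseudo-symmetric. You instead give a self-contained elementary proof using only Lemma~\ref{lem:1}, the characterization in Lemma~\ref{lem:3}, and the minimal-generator property phrased as ``$x$ is not a sum of two positive elements of $S$''; the underlying dichotomy (symmetric versus pseudo-symmetric, with the single exceptional gap $\frac{\F(S)}{2}$) is the same one the cited lemmas encapsulate. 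Your version buys independence from the two external references and makes visible that the forward implication needs no irreducibility at all, at the cost of a slightly longer case analysis; the paper's version is shorter only because it outsources exactly the work you carried out.
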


There are many characterizations of symmetric and pseudo-symmetric numerical semigroups. Probably, the most used is the following that appears in \cite{springer}.

\begin{lem}
\label{lem:3}
 Let $S$ be a numerical semigroup. Then:
\begin{enumerate}
 \item $S$ is symmetric if $\F(S)$ is odd and if $x \in \Z\backslash S$, then $\F(S)-x \in S$.
 \item $S$ is pseudo-symmetric if $\F(S)$ is even and if $x \in \Z\backslash S$, then either $x = \frac{\F(S)}{2}$ or $\F(S)-x \in S$.
\end{enumerate}

\end{lem}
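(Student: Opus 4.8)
The plan is to reduce both parts to a statement about maximality, using the characterization recalled in the introduction: a numerical semigroup $S$ is irreducible if and only if it is maximal, for inclusion, among numerical semigroups with Frobenius number $\F(S)$, and the irreducible ones are exactly the symmetric ones (which have odd Frobenius number) together with the pseudo-symmetric ones (which have even Frobenius number). Writing $F=\F(S)$, it therefore suffices to prove: \emph{(1)} if $F$ is odd, then $S$ is maximal if and only if $F-x\in S$ for every $x\in\Z\backslash S$; and \emph{(2)} if $F$ is even, then $S$ is maximal if and only if $F-x\in S$ for every $x\in\Z\backslash S$ with $x\ne\frac{F}{2}$. Since $F-x\in S$ whenever $x\le 0$ (as then $F-x>F$), and $F-F=0\in S$, these conditions are only restrictive for $0<x<F$; I call an integer $h$ \emph{bad} when $0<h<F$, $h\notin S$, $F-h\notin S$ and, in case (2) only, $h\ne\frac{F}{2}$, so that (1) and (2) assert precisely that $S$ has no bad integer.

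The direction ``no bad integer $\Rightarrow$ maximal'' is short. Suppose $S\subsetneq T$ with $\F(T)=F$, and pick $y\in T\backslash S$; then $0<y<F$. In case (1), $y$ is not bad, so $F-y\in S\subseteq T$, and hence $F=y+(F-y)\in T$, contradicting $\F(T)=F$. In case (2) the same argument applies unless $y=\frac{F}{2}$; but then every element of $T\backslash S$ must equal $\frac{F}{2}$, so $T=S\cup\{\frac{F}{2}\}$, which cannot be a numerical semigroup since $\frac{F}{2}+\frac{F}{2}=F\notin T$. In either case $S$ is maximal.

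For the converse, ``maximal $\Rightarrow$ no bad integer'', I would argue contrapositively: from a bad integer I build a numerical semigroup that strictly contains $S$ and still has Frobenius number $F$. Let $h$ be the largest bad integer. The key first step is that $h>\frac{F}{2}$: otherwise $h<\frac{F}{2}$ (it cannot equal $\frac{F}{2}$, which is not an integer in case (1) and is excluded by definition in case (2)), and then $F-h$ is also bad --- it satisfies $0<F-h<F$, $F-h\notin S$, $F-(F-h)=h\notin S$ and $F-h\ne\frac{F}{2}$ --- while $F-h>\frac{F}{2}>h$, contradicting maximality of $h$. From $h>\frac{F}{2}$ we get $2h>F$, hence $2h\in S$. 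Next, $h\in\PF(S)$: if some $s\in S\backslash\{0\}$ had $h+s\notin S$, then $h<h+s<F$ (the inequality $h+s>F$ would give $h+s\in S$, and $h+s=F$ would give $s=F-h\notin S$), so $h+s$ satisfies all defining conditions of a bad integer except possibly $F-(h+s)\notin S$ (in case (2) it also satisfies $h+s\ne\frac{F}{2}$, since $h+s>h>\frac{F}{2}$); being larger than $h$ it is not bad, so $F-(h+s)\in S$, whence $F-h=\bigl(F-(h+s)\bigr)+s\in S$, contradicting $F-h\notin S$. Having $h\in\PF(S)$ and $2h\in S$, one checks on the possible summands that $S\cup\{h\}$ is a numerical semigroup (this is exactly the statement that $h$ is a special gap of $S$), and its Frobenius number is $F$ because $0<h<F$; this contradicts the maximality of $S$.

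The step I expect to be the main obstacle is this last one: the hypothesis ``the duality fails'' is almost vacuous on its face, and the real work is to extract from it an element that may legitimately be adjoined to $S$. Choosing $h$ \emph{maximal} among bad integers is the device that makes it go through: it forces $h>\frac{F}{2}$, which simultaneously trivializes ``$2h\in S$'' and eliminates the exceptional case $h+s=\frac{F}{2}$, so the delicate role of $\frac{F}{2}$ in the pseudo-symmetric statement never actually causes trouble. (If instead one adopts $\PF(S)=\{F\}$, resp.\ $\PF(S)=\{\frac{F}{2},F\}$, as the working definition of symmetric, resp.\ pseudo-symmetric, the same conclusions come out faster from the standard fact that every gap of $S$ lies below some element of $\PF(S)$ in the order $a\preceq b\Leftrightarrow b-a\in S$: a bad $h$ is then dominated by some $p\in\PF(S)$, and $p-h\in S$ together with $p\in\PF(S)$ forces $F-h\in S$, a contradiction; conversely, any element of $\PF(S)$ outside the prescribed set is itself bad.)
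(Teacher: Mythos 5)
Your argument is correct, but note that the paper does not actually prove Lemma~\ref{lem:3}: it is quoted as a known characterization from the reference \cite{springer}, so there is no in-paper proof to compare against. What you supply is essentially the standard argument from that reference, made self-contained: you take as working definition that symmetric (resp.\ pseudo-symmetric) means irreducible with odd (resp.\ even) Frobenius number, invoke the maximality characterization of irreducibility recalled in the introduction, and then prove ``maximal $\Leftrightarrow$ the duality condition'' directly. The forward direction (duality implies maximality) is the easy adjunction-of-$F$ argument, and your converse hinges on the right device: choosing $h$ \emph{maximal} among gaps violating the duality forces $h>\frac{F}{2}$, which gives $2h\in S$, kills the $\frac{F}{2}$ exception, and lets you show $h\in\PF(S)$, so that $S\cup\{h\}$ is a numerical semigroup with the same Frobenius number --- exactly the special-gap mechanism ($h\in\SG(S)$) that the paper itself exploits in Section~\ref{sec:2}. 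All the individual steps check out (in particular $0<y<F$ for $y\in T\backslash S$, the exclusion of $T=S\cup\{\frac{F}{2}\}$, and $F-h=(F-(h+s))+s\in S$ in the pseudo-Frobenius verification). The only caveat worth flagging is definitional rather than mathematical: your reduction presupposes the irreducibility-based definition of symmetric and pseudo-symmetric (the convention of \cite{springer}, and the one consistent with how the paper uses Lemma~\ref{lem:3} and Lemma~\ref{lem:4}); under the alternative convention where the duality condition is itself the definition, the lemma would be vacuous, so stating your starting definition explicitly is essential, and you do so. The payoff of your route is a short, unified proof of both parities that keeps the paper self-contained instead of deferring to the textbook.
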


Let $S$ be a numerical semigroup. The cardinal of its set of gaps, $\G(S)=\N \backslash S$, is called the genus of $S$ and it is usually denoted by $\g(S)$.

\begin{lem}
\label{lem:4}
 Let $S$ be a numerical semigroup. Then:
\begin{enumerate}
 \item $S$ is symmetric if and only if $\g(S)=\frac{\F(S)+1}{2}$.
\item  $S$ is pseudo-symmetric if and only if $\g(S)=\frac{\F(S)+2}{2}$.
\end{enumerate}

\end{lem}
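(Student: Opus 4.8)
The plan is to deduce both equivalences from Lemma \ref{lem:3} by a counting argument organized around the involution $\phi\colon\{0,1,\dots,\F(S)\}\to\{0,1,\dots,\F(S)\}$ given by $\phi(x)=\F(S)-x$. Write $H=\G(S)$ for the set of gaps and $T=S\cap\{0,1,\dots,\F(S)\}$, so that $\{0,1,\dots,\F(S)\}=H\sqcup T$, $|H|=\g(S)$ and $|T|=\F(S)+1-\g(S)$. The key observation, valid for every $S$ with no extra hypothesis, is that $\phi$ maps $T$ into $H$: if $y\in T$ with $0<y<\F(S)$ then $\F(S)-y\notin S$ (else $\F(S)=y+(\F(S)-y)\in S$), and $\phi(0)=\F(S)\in H$; since $\phi$ is an involution, this map $T\to H$ is injective, which already forces $\g(S)\ge\frac{\F(S)+1}{2}$.

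For part (1) I would first assume $\F(S)$ is odd, so that $\phi$ has no fixed point. For $x\in H$ one has $\phi(x)\in\{0,\dots,\F(S)-1\}$, so $\phi(x)\in S$ iff $\phi(x)\in T$; and any $x\in\Z\setminus S$ outside $H$ is negative, whence $\phi(x)>\F(S)$ lies in $S$. Therefore, by Lemma \ref{lem:3}(1), $S$ is symmetric if and only if $\phi(H)\subseteq T$. Combined with $\phi(T)\subseteq H$ and the fact that $\phi$ is an involution on $T\sqcup H$, symmetry is equivalent to $\phi$ restricting to a bijection between $H$ and $T$, i.e. to $\g(S)=\F(S)+1-\g(S)$, i.e. $\g(S)=\frac{\F(S)+1}{2}$. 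For the converse I would note that $\g(S)=\frac{\F(S)+1}{2}$ forces $\F(S)$ odd by integrality of the genus, and then the injection $T\hookrightarrow H$ above is a bijection between finite sets of equal cardinality, so $\phi(H)=T\subseteq S$ and Lemma \ref{lem:3}(1) yields symmetry.

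For part (2) I would run the same scheme with $\F(S)$ even. Then $\F(S)/2$ is the unique fixed point of $\phi$, and $\F(S)/2\in H$ (else $\F(S)=2\cdot\frac{\F(S)}{2}\in S$). Restricting $\phi$ to the fixed-point-free set $\{0,\dots,\F(S)\}\setminus\{\F(S)/2\}=T\sqcup\bigl(H\setminus\{\F(S)/2\}\bigr)$, one checks exactly as before that $\phi$ injects $T$ into $H\setminus\{\F(S)/2\}$, and that by Lemma \ref{lem:3}(2) pseudo-symmetry of $S$ is equivalent to $\phi\bigl(H\setminus\{\F(S)/2\}\bigr)\subseteq T$; hence, by the involution-plus-counting argument, to $\phi$ restricting to a bijection $H\setminus\{\F(S)/2\}\leftrightarrow T$, i.e. to $\g(S)-1=\F(S)+1-\g(S)$, that is $\g(S)=\frac{\F(S)+2}{2}$. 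The converse once more uses that this equality forces $\F(S)$ even and upgrades the injection to a bijection.

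The only points that need care are the edge cases $y=0$ and $y=\F(S)/2$ in the statement ``$\phi$ maps $T$ into $H$ (resp. $H\setminus\{\F(S)/2\}$)'', and the translation between the gap condition in Lemma \ref{lem:3} and the set inclusions $\phi(H)\subseteq T$ (resp. $\phi(H\setminus\{\F(S)/2\})\subseteq T$), which rests only on the trivial remark that the elements of $\Z\setminus S$ not handled by $H$ are either negative or equal to $\F(S)/2$. I do not anticipate any genuine obstacle: the whole content is the fixed-point-free involution $x\mapsto\F(S)-x$ together with the fact that one of the two inclusions needed for it to be a bijection holds automatically, so that (pseudo-)symmetry is precisely the numerical condition that makes the other inclusion hold.
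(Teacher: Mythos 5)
Your proof is correct. Note, however, that the paper itself offers no proof of this lemma: it is stated as a standard characterization, imported from the literature (the references \cite{springer}, \cite{barucci}, \cite{froberg}), so there is no internal argument to compare yours against. What you give is the classical counting proof: the involution $\phi(x)=\F(S)-x$ on $\{0,\dots,\F(S)\}$ always injects $T=S\cap\{0,\dots,\F(S)\}$ into the gap set $H$ (this is the inequality $\g(S)\geq\left\lceil\frac{\F(S)+1}{2}\right\rceil$ lurking behind the paper's remark after Lemma \ref{lem:4}), and (pseudo-)symmetry, read through Lemma \ref{lem:3}, is exactly the reverse inclusion $\phi(H)\subseteq T$ (respectively $\phi(H\setminus\{\F(S)/2\})\subseteq T$), so equality of cardinalities is forced; the converse uses integrality of $\g(S)$ to pin down the parity of $\F(S)$ and upgrades the injection to a bijection. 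You handle the relevant edge cases correctly: $\phi(0)=\F(S)\in H$, the fixed point $\F(S)/2$ is a gap when $\F(S)$ is even, and negative elements of $\Z\setminus S$ are sent above $\F(S)$, hence into $S$. The only point worth making explicit is that Lemma \ref{lem:3} must be read as an ``if and only if'' characterization (as the paper intends, despite the one-directional wording), since your forward implications need ``symmetric $\Rightarrow$ $\F(S)$ odd'' and ``pseudo-symmetric $\Rightarrow$ $\F(S)$ even''; with that reading, your argument is complete and is the standard way to establish the lemma.
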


As a consequence of the above results we have that the set of symmetric (resp. pseudo-symmetric) numerical semigroups is the set of irreducible numerical semigroups with odd (resp. even) Frobenius number, and that a numerical semigroup, $S$, is irreducible if and only if $\g(S)=\left\lceil \frac{\F(S)+1}{2} \right\rceil$ (here $\lceil z \rceil  = \min\{n \in \Z: z\leq n\}$ the ceiling part of any rational number $z$).

\begin{prop}
\label{prop:5}
 Let $S$ be an irreducible numerical semigroup with Frobenius number $F$ and let $x$ be a minimal generator of $S$ verifying that:
\begin{enumerate}
 \item\label{prop5:c1} $x < F$,
\item\label{prop5:c2} $2x - F \not\in S$,
\item\label{prop5:c3} $3x \neq 2F$, and
\item\label{prop5:c4} $4x \neq 3F$.
\end{enumerate}
Then, $\overline{S}= \left( S \backslash \{x\}\right) \cup \{F - x\}$ is an irreducible numerical semigroup with Frobenius number $F$.
\end{prop}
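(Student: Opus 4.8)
The plan is to proceed in three stages: show $\overline{S}$ is a numerical semigroup, identify its Frobenius number, and then invoke the genus characterization of irreducibility following Lemma \ref{lem:4}. For the setup, note that by Lemma \ref{lem:1} the set $T := S\backslash\{x\}$ is a numerical semigroup, and since $x<F=\F(S)$ (condition \ref{prop5:c1}), removing $x$ does not change the largest gap, so $\F(T)=F$. Because $x\in S$ we have $F-x\notin S$ (otherwise $F=x+(F-x)\in S$), so $F-x$ is a gap of $T$, and $F-x>0$. Applying Lemma \ref{lem:2} together with condition \ref{prop5:c2} gives $F-x\in\PF(T)$.

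Next I would prove that $\overline{S}=T\cup\{F-x\}$ is a numerical semigroup. It contains $0$ and has finite complement, so only closure under addition must be checked; take $a,b\in\overline{S}$. If $a,b\in T$ we are done. If exactly one of them equals $F-x$, say $a=F-x$ and $b\in T$, then either $b=0$ (trivial) or $b\neq 0$ and $a+b=(F-x)+b\in T$ because $F-x\in\PF(T)$. The only remaining case is $a=b=F-x$, and this is the crux of the argument: one must show $2(F-x)=2F-2x\in\overline{S}$.

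To handle $2(F-x)$, first observe $2x\neq F$ (since $2x\in S$ but $F\notin S$). If $2x<F$, then $2(F-x)>F>x$, so $2(F-x)\in S\backslash\{x\}=T$. If $2x>F$, then $2x-F$ is a positive integer with $2x-F\notin S$ by condition \ref{prop5:c2}, and I would apply Lemma \ref{lem:3} to the irreducible semigroup $S$: when $F$ is odd, $F-(2x-F)=2(F-x)\in S$; when $F$ is even, either $2x-F=\tfrac{F}{2}$ — which forces $4x=3F$, contradicting condition \ref{prop5:c4} — or $2(F-x)\in S$. In every case $2(F-x)\in S$, and $2(F-x)\neq x$ because $2(F-x)=x$ would give $3x=2F$, contradicting condition \ref{prop5:c3}; hence $2(F-x)\in T\subseteq\overline{S}$. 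This establishes closure, so $\overline{S}$ is a numerical semigroup.

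Finally, I would compute $\F(\overline{S})$ and $\g(\overline{S})$. Since $\N\backslash\overline{S}=\big((\N\backslash S)\cup\{x\}\big)\backslash\{F-x\}$, with $x$ a genuinely new gap, $F-x$ a former gap, and $x\neq F-x$ (as $2x\neq F$), we get $\g(\overline{S})=\g(S)$; moreover $F$ remains a gap of $\overline{S}$ while every integer larger than $F$ lies in $T$, so $\F(\overline{S})=F$. As $S$ is irreducible, the remark after Lemma \ref{lem:4} gives $\g(S)=\big\lceil\tfrac{F+1}{2}\big\rceil$, hence $\g(\overline{S})=\big\lceil\tfrac{\F(\overline{S})+1}{2}\big\rceil$, and the same remark shows $\overline{S}$ is irreducible with Frobenius number $F$. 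I expect the main obstacle to be precisely the case $a=b=F-x$: it is the only place where conditions \ref{prop5:c3} and \ref{prop5:c4} enter, and it requires splitting on the parity of $F$ and on the sign of $2x-F$.
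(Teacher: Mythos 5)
Your proposal is correct and follows essentially the same route as the paper: closure of $S\backslash\{x\}$ via Lemma \ref{lem:1}, $F-x\in\PF(S\backslash\{x\})$ via Lemma \ref{lem:2}, the crux case $2(F-x)$ handled through Lemma \ref{lem:3} with conditions \eqref{prop5:c3} and \eqref{prop5:c4}, and irreducibility from genus preservation and Lemma \ref{lem:4}. The only (harmless) differences are that you dispose of the subcase $2x<F$ elementarily before invoking Lemma \ref{lem:3}, and that you spell out the genus and Frobenius-number computations that the paper merely asserts.
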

\begin{proof}
 To prove the result it is enough to see that $\overline{S}$ is closed under addition since in that case $\overline{S}$ is a numerical semigroup with Frobenius number $F$ and $\g(\overline{S}) = \g(S)$. Hence, by Lemma \ref{lem:4} we get that $\overline{S}$ is irreducible.

The addition of two elements in $S\backslash \{x\}$ is an element in $S\backslash \{x\}$ (Lemma \ref{lem:1}). By Lemma \ref{lem:2} we deduce that $F-x \in \PF(S\backslash \{x\})$ and then $F-x + s \in \overline{S}$ for all $s \in S\backslash \{x\}$.

To conclude the proof, we see that $2(F-x) \in S\backslash \{x\}$. For the sake of that we distinguish two cases:
\begin{itemize}
 \item Assume that $S$ is symmetric. Since $2x-F \not\in S$, then by Lemma \ref{lem:3} we have that $F - (2x-F) \in S$. Thus, $2(F-x) \in S$. Moreover, since  $3x \neq 2F$, then $2(F-x) \neq x$ and consequently $2(F-x) \in S \backslash \{x\}$.
\item Assume now that $S$ is pseudo-symmetric. Since $2x-F \not\in S$, by Lemma \ref{lem:3} we have that either $2x-F = \frac{F}{2}$ or $F-(2x-F) \in S$. Since $4x \neq 3F$ then $2x-F \neq \frac{F}{2}$. Thus, $2(F-x) \in S$. Furthermore,  since  $3x \neq 2F$, then $2(F-x) \neq x$ and consequently $2(F-x) \in S \backslash \{x\}$.
\end{itemize}

\end{proof}

The smallest positive integer belonging to a numerical semigroup $S$ is called the  \emph{multiplicity} of $S$ and it is denoted by $\m(S)$. Note that $\m(S)$ is always a minimal generator of $S$.

\begin{cor}
 \label{cor:6}
Let $S$ be an irreducible numerical semigroup  with $\m(S) < \frac{\F(S)}{2}$. Then, $\overline{S}= \left( S \backslash \{\m(S)\}\right) \cup \{\F(S) - \m(S)\}$ is an irreducible numerical semigroup with $\F(\overline{S}) = \F(S)$ and $\m(\overline{S}) > \m(S)$.
\end{cor}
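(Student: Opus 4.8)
The plan is to apply Proposition \ref{prop:5} directly with the choice $x=\m(S)$. As observed just before the statement, the multiplicity $\m(S)$ is always a minimal generator of $S$, so $x=\m(S)$ is a legitimate candidate for that proposition, and it only remains to verify its four hypotheses \eqref{prop5:c1}--\eqref{prop5:c4} for this particular $x$.

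All four conditions follow at once from the single assumption $\m(S)<\frac{\F(S)}{2}$. Writing $F=\F(S)$ and $x=\m(S)$, the hypothesis gives $2x<F$; hence $x<F$, which is \eqref{prop5:c1}, and $2x-F<0$, so that $2x-F\notin S$ because $S\subseteq\N$, which is \eqref{prop5:c2}. In the same way $3x<\frac{3F}{2}<2F$ yields $3x\neq 2F$ (condition \eqref{prop5:c3}) and $4x<2F<3F$ yields $4x\neq 3F$ (condition \eqref{prop5:c4}). With all hypotheses checked, Proposition \ref{prop:5} gives that $\overline{S}=\bigl(S\setminus\{\m(S)\}\bigr)\cup\{F-\m(S)\}$ is an irreducible numerical semigroup with $\F(\overline{S})=F=\F(S)$.

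To finish I would compare multiplicities. Since $\overline{S}$ is a numerical semigroup it has a well-defined multiplicity, and its positive elements are exactly the positive elements of $S$ other than $\m(S)$ together with the new element $F-\m(S)$. Every positive element of $S$ other than $\m(S)$ is strictly greater than $\m(S)$, because $\m(S)$ is by definition the smallest positive element of $S$; and $F-\m(S)>F-\frac{F}{2}=\frac{F}{2}>\m(S)$. Taking the minimum over these, $\m(\overline{S})>\m(S)$.

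Since the argument is essentially a specialization of Proposition \ref{prop:5}, I do not expect any serious obstacle; the only point that needs a moment's care is condition \eqref{prop5:c2}, where one should notice that $2\m(S)-F$ is negative and therefore cannot belong to the numerical semigroup $S$, rather than trying to establish non-membership by a direct computation inside $S$.
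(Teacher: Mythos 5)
Your proposal is correct and follows essentially the same route as the paper: it verifies that $x=\m(S)$ is a minimal generator satisfying conditions \eqref{prop5:c1}--\eqref{prop5:c4} of Proposition \ref{prop:5} (all immediate from $2\m(S)<\F(S)$, with \eqref{prop5:c2} coming from the negativity of $2\m(S)-\F(S)$), and then deduces $\m(\overline{S})>\m(S)$ from $\F(S)-\m(S)>\m(S)$. The only difference is that you spell out the comparison of multiplicities slightly more explicitly than the paper does, which is harmless.
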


\begin{proof}
 Let us see that $\m(S)$ is a minimal generator of $S$ that holds the conditions \eqref{prop5:c1}-\eqref{prop5:c4} in the above proposition. Indeed, since $\m(S) < \frac{\F(S)}{2}$ then \eqref{prop5:c1} is clearly satisfied. Furthermore, $2\m(S) - \F(S) < 0$ and then \eqref{prop5:c2} is also verified. Also, since $\m(S) < \frac{\F(S)}{2}$, \eqref{prop5:c3} and \eqref{prop5:c4} are easily deduced. Finally, $\m(\overline{S}) > \m(S)$ because $\overline{S}= \left( S \backslash \{\m(S)\}\right) \cup \{\F(S) - \m(S)\}$ and $\F(S) - \m(S) > \m(S)$.
\end{proof}

Note that if we take an irreducible numerical semigroup and we apply repeatedly the construction given in Corollary \ref{cor:6}, we get, in a finite number of steps, an irreducible numerical semigroup with all its minimal generators larger than half of its Frobenius number. The following result shows that such a semigroup is unique.

\begin{prop}
\label{prop:7}
Let $F$ be a positive integer. Then, there exists an unique irreducible numerical semigroup $\C(F)$ with Frobenius number $F$ and all its minimal generators larger than $\frac{F}{2}$. Moreover,
$$
\C(F)=
 \left\{\begin{array}{rl}
\{0, \frac{F+1}{2}, \rightarrow\} \backslash \{F\} & \mbox{if $F$ is odd,}\\
\{0, \frac{F}{2}+1, \rightarrow\} \backslash \{F\} & \mbox{if $F$ is even,}
\end{array}\right.
$$
(here, if $a_1 < \cdots < a_k$ are integers, then we use $\{a_1, \ldots, a_k, \rightarrow\}$ to denote the set $\{a_1, \ldots, a_k\} \cup \{z \in \Z: z > a_k\}$.)
\end{prop}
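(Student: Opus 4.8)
The plan is to split the statement into an \emph{existence} part and a \emph{uniqueness} part, and to treat the odd and even cases of $F$ uniformly by appealing to the fact, recorded right after Lemma~\ref{lem:4}, that a numerical semigroup $S$ is irreducible if and only if $\g(S)=\lceil\frac{\F(S)+1}{2}\rceil$. A small observation that streamlines everything: since every minimal generator of a numerical semigroup is at least its multiplicity, the hypothesis ``all minimal generators larger than $\frac F2$'' is equivalent to the single condition $\m(S)>\frac F2$.

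For existence I would simply verify that $T_F:=\{0,\lceil\frac{F+1}{2}\rceil,\rightarrow\}\setminus\{F\}$ does the job (an elementary parity computation shows this is exactly the set displayed in the two cases of the statement). It is closed under addition, because the sum of two of its nonzero elements is at least $2\lceil\frac{F+1}{2}\rceil\ge F+1$; its set of gaps is $\{1,\dots,\lceil\frac{F+1}{2}\rceil-1\}\cup\{F\}$, so $\F(T_F)=F$ and $\g(T_F)=\lceil\frac{F+1}{2}\rceil$, whence $T_F$ is irreducible by the criterion above; and $\m(T_F)=\lceil\frac{F+1}{2}\rceil>\frac F2$, so all its minimal generators exceed $\frac F2$ by the preliminary observation.

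For uniqueness, let $S$ be any irreducible numerical semigroup with $\F(S)=F$ and $\m(S)>\frac F2$. The heart of the argument is to show $S=\{0,\m(S),\rightarrow\}\setminus\{F\}$. The inclusions $0\in S$, $\{F+1,\rightarrow\}\subseteq S$, $\{1,\dots,\m(S)-1\}\cap S=\emptyset$ and $F\notin S$ are immediate, so the only point to prove is $[\m(S),F-1]\subseteq S$. To that end take $n$ with $\m(S)\le n\le F-1$ and suppose $n\notin S$; then $n$ is a gap with $0<n<F$, and since $n\ge\m(S)>\frac F2$ we have $n\ne\frac F2$, so Lemma~\ref{lem:3} (part (1) if $F$ is odd, part (2) if $F$ is even, using that $S$ is symmetric resp. pseudo-symmetric) gives $F-n\in S$ --- but $0<F-n\le F-\m(S)<\frac F2<\m(S)$, contradicting the minimality of $\m(S)$. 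With the description of $S$ in hand, $\g(S)=(\m(S)-1)+1=\m(S)$, and comparing with $\g(S)=\lceil\frac{F+1}{2}\rceil$ forces $\m(S)=\lceil\frac{F+1}{2}\rceil$, so $S=T_F$; this simultaneously establishes uniqueness and shows one may take $\C(F)=T_F$.

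I do not expect a genuine obstacle: once Lemma~\ref{lem:3} is invoked the argument is bookkeeping with intervals of integers. The one spot needing a moment's care is the exceptional value $\frac F2$ in the pseudo-symmetric clause of Lemma~\ref{lem:3}, but it is exactly neutralized by the \emph{strict} inequality $\m(S)>\frac F2$. One should also dispose of the degenerate case $\m(S)\ge F$ in the uniqueness argument (this forces $\m(S)=F+1$, hence $\g(S)=F$, which is compatible with irreducibility only when $F\le 2$, and there $S=\{0,F+1,\rightarrow\}=T_F$ already), and check that the ``$\rightarrow$'' notation behaves for these small values of $F$; this is routine.
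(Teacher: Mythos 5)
Your proposal is correct, and while the existence half coincides with the paper's (verify that the displayed set is a numerical semigroup with Frobenius number $F$, compute its genus, and apply the genus criterion following Lemma~\ref{lem:4}), your uniqueness argument takes a genuinely different route. The paper disposes of uniqueness in two lines: since all minimal generators of $S$ exceed $\frac{F}{2}$ and $F\notin S$, every generator lies in $\C(F)$, so $S\subseteq \C(F)$; it then invokes the fact recalled in the introduction (from \cite{pacific}) that an irreducible numerical semigroup is maximal among numerical semigroups with its Frobenius number, which forces $S=\C(F)$. You avoid that maximality characterization entirely: using Lemma~\ref{lem:3} (symmetric or pseudo-symmetric according to the parity of $F$, the exceptional value $\frac{F}{2}$ being neutralized by $\m(S)>\frac{F}{2}$) you show no gap of $S$ can lie in $[\m(S),F-1]$, hence $S=\{0,\m(S),\rightarrow\}\setminus\{F\}$, and the genus count $\g(S)=\m(S)=\lceil\frac{F+1}{2}\rceil$ then pins down $\m(S)$ and hence $S$. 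Your route is somewhat longer but stays within the lemmas of Section~\ref{sec:1} and yields the explicit structure of any such $S$ as a by-product, whereas the paper's is shorter but leans on the external maximality property; your reduction of ``all minimal generators larger than $\frac{F}{2}$'' to $\m(S)>\frac{F}{2}$ and your treatment of the degenerate case $\m(S)\geq F$ are both sound.
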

\begin{proof}
It is clear that $\C(F)$ is a numerical semigroup with Frobenius number $F$. Furthermore, $\g(\C(F)) = \frac{F+1}{2}$ if $F$ is odd and $\g(\C(F)) = \frac{F+2}{2}$ if $F$ is even. By applying Lemma \ref{lem:4} we have that $\C(F)$ is irreducible.

Assume now that $S$ is an irreducible numerical semigroup with Frobenius number $F$ and with all its minimal generators larger than $\frac{F}{2}$, then, it is clear that $S \subseteq \C(F)$. By applying now that $S$ is maximal in the set of numerical semigroups with Frobenius number $F$, we conclude that $S=\C(F)$.
\end{proof}

For a given $S \in \I(F)$, we define the following sequence of elements in $\I(F)$:
\begin{itemize}
\item[] $S_0=S$,
\item[] $S_{n+1}= \left\{\begin{array}{cl}
\left( S_n \backslash \{\m(S_n)\}\right) \cup \{F - \m(S_n)\} & \mbox{if $\m(S_n) < \frac{F}{2}$,}\\
S_n & \mbox{ otherwise.}
\end{array}\right.
$
\end{itemize}
Let $k = \# \left\{s \in S: s < \frac{F}{2}\right\} -1$ (here $\#A$ stands for the cardinal of the finite set $A$), then $S_{k+n}=\C(F)$ for all $n \in \N$.

\begin{ex}
\label{ex:8}
Let $S=\langle 4, 5\rangle$, which is an irreducible numerical semigroup with Frobenius number $11$. Then:
\begin{itemize}
\item[] $S_0=\langle 4, 5\rangle$,
\item[] $S_1= \left( \langle 4, 5\rangle \backslash \{4\}\right) \backslash \{7\} = \langle 5,7,8,9\rangle$,
\item[] $S_2= \left( \langle  5,7,8,9 \rangle \backslash \{5\}\right) \backslash \{6\} = \langle 6,7,8,9,10\rangle = \C(11)$,
\item[] $S_{2+n} = \C(11) = \langle 6,7,8,9,10\rangle$, for all $n \in \N$.
\end{itemize}
Note that in this case $k = \# \left\{s \in S: s < \frac{11}{2}\right\} -1 = \#\{0,4,5\} - 1 = 2$.
\end{ex}

A (directed) graph $G$ is a pair $(V, E)$ where $V$ is a nonempty set whose elements are called vertices and $E$ is a subset of $\{(v,w) \in V\times V: v \neq w\}$. The elements in $E$ are called edges. A path that connects two vertices of $G$, $v$ and $w$, is a sequence of distinct edges in the form $(v_0, v_1), (v_1, v_2), \ldots, (v_{n-1}, v_n)$ with $v_0=v$ and $v_n=w$ for some $n \in \N$.

A graph $G$ is a tree if there exists a vertex $r$ (called the root of $G$) such that for any other vertex of $G$, $v$, there exists an unique path connecting $v$ and $r$. If $(v,w)$ is a edge of the tree, then, it is said that $v$ is a child of $w$. A leaf is a vertex without children.

Let $\I(F)$ be the set of irreducible numerical semigroups with Frobenius number $F$. We define the directed graph $\G(\I(F)) = (V, E)$ as follows:
\begin{itemize}
\item $V= \I(F)$, and
\item $(T,S) \in E$ if $m(T) < \frac{F}{2}$ and $S=\left( T \backslash \{\m(T)\}\right) \cup \{F - \m(T)\}$.
\end{itemize}

\begin{theo}
\label{theo:9}
Let $F$ be a positive integer. The directed graph $\G(\I(F))$ is a tree with root $\C(F)$. Moreover, if $S \in \I(F)$, then, the children of $S$ in $\G(\I(F))$ are $\left( S \backslash \{x_1\}\right) \cup \{F - x_1\}, \ldots, \left( S \backslash \{x_r\}\right) \cup \{F - x_r\}$, where $\{x_1, \ldots, x_r\}$ is the set of minimal generators, $x$, of $S$ that verify:
\begin{enumerate}
\item\label{theo9:c1} $\frac{F}{2} < x < F$,
\item\label{theo9:c2} $2x - F \not\in S$,
\item\label{theo9:c3} $3x \neq 2F$,
\item\label{theo9:c4} $4x \neq 3F$,
\item\label{theo9:c5} $F-x < \m(S)$.
\end{enumerate}
\end{theo}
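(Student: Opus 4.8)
The plan is to prove the two assertions separately, reducing everything to Proposition~\ref{prop:5}, Corollary~\ref{cor:6}, Proposition~\ref{prop:7} and the sequence $S=S_0,S_1,\dots$ constructed just before the statement.

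For the tree structure I would use three observations. First, every vertex has out-degree at most one: by the definition of $E$, an edge $(S,S')$ forces $\m(S)<\frac{F}{2}$ and $S'=\left(S\setminus\{\m(S)\}\right)\cup\{F-\m(S)\}$, so $S'$ is determined by $S$. Second, $\C(F)$ is the only vertex with no outgoing edge: by Proposition~\ref{prop:7} all minimal generators of $\C(F)$, and in particular $\m(\C(F))$, exceed $\frac{F}{2}$, so no edge leaves $\C(F)$; conversely, if $S\in\I(F)$ with $S\ne\C(F)$, then $\m(S)<\frac{F}{2}$ — otherwise $\m(S)\ge\frac{F}{2}$, hence $\m(S)>\frac{F}{2}$ (equality is impossible, since $\m(S)=\frac{F}{2}$ would give $F=2\m(S)\in S$), and then every minimal generator of $S$ would exceed $\frac{F}{2}$, so $S=\C(F)$ by Proposition~\ref{prop:7}, a contradiction — and therefore Corollary~\ref{cor:6} produces an edge out of $S$. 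Third, for each vertex $S$ the sequence $S_0,S_1,\dots$ satisfies $(S_n,S_{n+1})\in E$ as long as $S_n\ne\C(F)$, has $\m(S_{n+1})>\m(S_n)$ by Corollary~\ref{cor:6} (so the $S_n$ are pairwise distinct until they stabilise), and reaches $\C(F)$ after finitely many steps. Putting these together, from any vertex the only way to travel along directed edges is to follow the unique outgoing edge, such a walk never repeats a vertex nor passes through $\C(F)$ before its final step, and it terminates at $\C(F)$; hence there is exactly one path from each vertex to $\C(F)$, so $\G(\I(F))$ is a tree with root $\C(F)$.

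For the list of children of a given $S\in\I(F)$ I would prove both inclusions, recalling that the children of $S$ are exactly the $T$ with $(T,S)\in E$. If $x$ is a minimal generator of $S$ satisfying \eqref{theo9:c1}--\eqref{theo9:c5}, then \eqref{theo9:c2}--\eqref{theo9:c4} are \eqref{prop5:c2}--\eqref{prop5:c4} and \eqref{theo9:c1} implies \eqref{prop5:c1}, so by Proposition~\ref{prop:5} the set $T:=\left(S\setminus\{x\}\right)\cup\{F-x\}$ lies in $\I(F)$; moreover \eqref{theo9:c5} gives $0<F-x<\m(S)$, so $F-x\notin S$ and $F-x$ is the least positive element of $T$, i.e.\ $\m(T)=F-x<\frac{F}{2}$; a short computation then gives $S=\left(T\setminus\{\m(T)\}\right)\cup\{F-\m(T)\}$, so $(T,S)\in E$ and $T$ is a child of $S$. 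Conversely, let $T$ be a child of $S$; put $m:=\m(T)<\frac{F}{2}$, so $S=\left(T\setminus\{m\}\right)\cup\{F-m\}$, and set $x:=F-m$, which already gives \eqref{theo9:c1}. Since $S$ and $T$ are irreducible with Frobenius number $F$, Lemma~\ref{lem:4} gives $\g(S)=\g(T)$; this rules out $S=T\setminus\{m\}$ and hence forces $x\notin T$, so $S\setminus\{x\}=T\setminus\{m\}$ and $T=\left(S\setminus\{x\}\right)\cup\{F-x\}$. Because $m$ is the multiplicity of $T$, every positive element of $S\setminus\{x\}=T\setminus\{m\}$ exceeds $m=F-x$, so $\m(S)>F-x$, which is \eqref{theo9:c5}. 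If $x$ were not a minimal generator of $S$ it would be a sum of two positive elements of $S$, both smaller than $x$ and so lying in $S\setminus\{x\}\subseteq T$, forcing $x\in T$; thus $x$ is a minimal generator of $S$. The same contradiction rules out \eqref{theo9:c3} or \eqref{theo9:c4} failing: $3x=2F$ would give $x=2\m(T)\in T$ and $4x=3F$ would give $x=3\m(T)\in T$. Finally $F-x=\m(T)$ is a pseudo-Frobenius number of $T\setminus\{\m(T)\}=S\setminus\{x\}$ — for any numerical semigroup the multiplicity is a pseudo-Frobenius number of what remains after deleting it — so Lemma~\ref{lem:2}, applied to $S$ and $x$, yields \eqref{theo9:c2}.

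The step I expect to be the main obstacle is the converse inclusion, and within it the verification that the swap is invertible: one must show $x\notin T$ — the one place where the equality of genera from Lemma~\ref{lem:4}, which rests on $\F(S)=\F(T)=F$, is genuinely needed, in order to discard the degenerate case $S=T\setminus\{m\}$ — and that $F-x$ is really the multiplicity of $T$, not merely an element of it. Once these are in hand, conditions \eqref{theo9:c2}--\eqref{theo9:c5} all follow from $x\notin T$ together with Lemma~\ref{lem:2}, and the forward inclusion is immediate from Proposition~\ref{prop:5}.
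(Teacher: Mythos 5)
Your proposal is correct and takes essentially the same route as the paper: the tree structure is obtained from iterating Corollary \ref{cor:6} toward the unique semigroup $\C(F)$ of Proposition \ref{prop:7}, the sufficiency of conditions \eqref{theo9:c1}--\eqref{theo9:c5} comes from Proposition \ref{prop:5} together with $\m(T)=F-x<\frac{F}{2}$, and the necessity is proved by checking that $x=F-\m(T)$ is a minimal generator of $S$ satisfying \eqref{theo9:c1}--\eqref{theo9:c5}. The only local deviations are that you obtain \eqref{theo9:c2} via Lemma \ref{lem:2} (using that the multiplicity is a pseudo-Frobenius number of the semigroup with it removed), where the paper argues directly that $F-2\m(T)\in S$ together with $2\m(T)\in S$ would force $F\in S$, and that you justify $F-\m(T)\notin T$ through equality of genera, where the one-line closure argument ($\m(T)\in T$ and $F-\m(T)\in T$ would give $F\in T$) also suffices --- both variants are valid.
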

\begin{proof}
As a direct consequence of the construction after Proposition \ref{prop:7}, we have that $\G(\I(F))$ is a tree with root $\C(F)$.

(\emph{Sufficiency}) By Proposition \ref{prop:5} we know that $T=\left( S \backslash \{x\}\right) \cup \{F - x\} \in \I(F)$. By condition \eqref{theo9:c5} we have that $\m(T)=F-x$ and by condition \eqref{theo9:c1} that $\m(T)<\frac{F}{2}$. Hence, $S= \left( T \backslash \{\m(T)\}\right) \cup \{F - \m(T)\}$, so $T$ is a child of $S$.

(\emph{Necessity}) Let $T$ be a child of $S$. Then, $\m(T) < \frac{F}{2}$ and $S= \left( T \backslash \{\m(T)\}\right) \cup \{F - \m(T)\}$. Then, $T=\left( S \backslash \{F - \m(T)\}\right) \cup \{F -(F - \m(T))\}$. To conclude the proof it is enough to see that $F-\m(T)$ is a minimal generator if $S$ verifying conditions \eqref{theo9:c1}-\eqref{theo9:c5}:
\begin{enumerate}
\item Since $F-\m(T) \not\in T$ and $S=\left(T \backslash \{\m(T)\}\right) \cup \{F - \m(T)\}$, we easy deduce that $F-\m(T)$ is a minimal generator of $S$. Furthermore, since $\m(T) < \frac{F}{2}$ then $\frac{F}{2} < F - \m(T) < F$.
\item If $2(F-\m(T)) - F \in S$, we have that $F-2\m(T) \in S$. However, $2\m(T) \in S$ since $2\m(T) \in T \backslash \{\m(T)\}$, we have that $F= F - 2\m(T) + 2\m(T) \in S$ which is not possible.
\item If $3(F-\m(T)) = 2F$ then $F= 3\m(T) \in S$ contradicting the definition of Frobenius number.
\item If $4(F-\m(T)) = 3F$ then $F= 4\m(T) \in S$ which is not possible.
\item Since $\m(T) < \frac{F}{2}$ then $\m(T) < F  - \m(T)$ and then, $\m(T) < \m(S)$. Thus, $F - (F-\m(T)) < \m(S)$.
\end{enumerate}
\end{proof}

We conclude this section by illustrating the applicability of the above result to construct, explicitly, the tree $\G(\I(F))$.

\begin{ex}
\label{ex:10}
Let us compute the whole set of irreducible numerical semigroups with Frobenius number $11$. We start by the root of the tree, $\C(11)=\langle 6,7,8,9,10 \rangle$. The minimal generators that hold the conditions \eqref{theo9:c1}-\eqref{theo9:c5} are: $x_1=8$, $x_2=7$ and $x_3=6$. Then, the children of $\C(11)$ are:
\begin{itemize}
\item[] $\left(\langle 6,7,8,9,10 \rangle \backslash \{8\} \right) \cup \{3\} = \langle 3, 7 \rangle$
\item[] $\left(\langle 6,7,8,9,10 \rangle \backslash \{7\} \right) \cup \{4\} = \langle 4,6,9 \rangle$
\item[] $\left(\langle 6,7,8,9,10 \rangle \backslash \{6\} \right) \cup \{5\} = \langle 5,7,8,9 \rangle$
\end{itemize}
Thus, the tree $\G(\I(11))$ starts in the form:
$$
\xymatrix{ & \langle 6,7,8,9,10 \rangle & \\
\langle 3, 7 \rangle \ar[ru] & \langle 4,6,9 \rangle \ar[u]& \langle 5,7,8,9 \rangle \ar[lu]}
$$

Next, we compute the children of $\langle 3, 7 \rangle$, $\langle 4,6,9 \rangle$ and $\langle 5,7,8,9 \rangle$. First, we observe that $\langle 3, 7 \rangle$ has no children. The unique generator of $\langle 4,6,9 \rangle$ with the conditions of Theorem \ref{theo:9} is $x_1=9$. Then, $\langle 4,6,9 \rangle$ only has a child, which is $\left( \langle 4,6,9 \rangle \backslash \{9\}\right) \cup \{2\} = \langle 2, 13\rangle$. For $\langle 5,7,8,9 \rangle$, only $x_1=7$ holds the conditions of Theorem \ref{theo:9}, so the unique child of $\langle 5,7,8,9 \rangle$ is $\left( \langle 5,7,8,9 \rangle \backslash \{7\}\right) \cup \{4\} = \langle 4, 5\rangle$. Hence, the tree continues as:
$$
\xymatrix{ & \langle 6,7,8,9,10 \rangle & \\
\langle 3, 7 \rangle \ar[ru] & \langle 4,6,9 \rangle \ar[u]& \langle 5,7,8,9 \rangle \ar[lu]\\
& \langle 2, 13\rangle \ar[u]& \langle 4,5 \rangle \ar[u]}
$$

Finally, since $\langle 2,3\rangle$ and $\langle 4, 5 \rangle$ do not have  minimal generators with the conditions of Theorem \ref{theo:9}, they have no children. Thus, the tree $\G(\I(11))$ is completed.
\end{ex}

\section{A Kunz-coordinates vector construction of the tree $\G(\I(F))$}
\label{sec:2}

In this section we use a different encoding of a numerical semigroups to compute the tree $\G(\I(F))$ for any positive integer $F$. We present an analogous construction to the one in the section above but based on manipulating vectors in $\{0,1\}^F$. It leads us to an efficient procedure to compute the set of irreducible numerical semigroups with Frobenius number $F$.

 Let $S$ be a numerical semigroup and $n \in S$. The \emph{Ap\'ery set} of $S$ with respect to $n$ is the set $\Ap(S,n) = \{s
\in S :  s - n \not\in S\}$. This set was introduced by Ap\'ery in \cite{apery}.

The following characterization  of the Ap\'ery set that appears in \cite{springer} will be useful for our development.
 \begin{lem}
 \label{lem:11a}
Let $S$ be a numerical semigroup and $n \in S\backslash \{0\}$. Then $\Ap(S,n) = \{0 = w_0, w_1, \ldots,  w_{n -
1}\}$, where $w_i$ is the least
element in $S$ congruent with $i$ modulo $n$, for $i=1, \ldots, n-1$.
 \end{lem}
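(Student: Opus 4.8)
The plan is to prove the two inclusions $\{w_0, w_1, \ldots, w_{n-1}\} \subseteq \Ap(S,n)$ and $\Ap(S,n) \subseteq \{w_0, w_1, \ldots, w_{n-1}\}$, after first checking that each $w_i$ is well defined. Since $\N\backslash S$ is finite, every congruence class modulo $n$ contains infinitely many elements of $S$, so $w_i = \min\{s \in S : s \equiv i \pmod n\}$ exists for each $i \in \{0, 1, \ldots, n-1\}$. Because $0 \in S$ and no negative integer lies in $S$, we get $w_0 = 0$; and the $w_i$ are pairwise distinct, lying in distinct classes modulo $n$, so the proposed right-hand side has exactly $n$ elements.

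For $\{w_0, \ldots, w_{n-1}\} \subseteq \Ap(S,n)$: fix $i$. By definition $w_i \in S$, and if $w_i - n \in S$ then $w_i - n$ would be an element of $S$ congruent to $i$ modulo $n$ and strictly smaller than $w_i$, contradicting minimality. Hence $w_i - n \notin S$, i.e. $w_i \in \Ap(S,n)$.

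For the reverse inclusion: take $s \in \Ap(S,n)$ and let $i \in \{0, \ldots, n-1\}$ be the residue of $s$ modulo $n$. Minimality of $w_i$ gives $w_i \le s$, and since $s$ and $w_i$ lie in the same class, $s = w_i + kn$ for some $k \in \N$. If $k \ge 1$, then $s - n = w_i + (k-1)n$, which belongs to $S$ because $w_i \in S$, $n \in S$, and $S$ is closed under addition; this contradicts $s \in \Ap(S,n)$. Therefore $k = 0$ and $s = w_i$. Combining the two inclusions yields $\Ap(S,n) = \{0 = w_0, w_1, \ldots, w_{n-1}\}$.

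I do not expect a genuine obstacle here: the argument is elementary. The only place where the full numerical-semigroup hypothesis is used — rather than merely "submonoid of $(\N,+)$ containing $0$" — is the existence of $w_i$ for every residue $i$, which is precisely where the finiteness of $\N\backslash S$ is needed; everything else follows from closure under addition and the minimality defining the $w_i$.
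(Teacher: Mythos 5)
Your proof is correct and complete: the existence of each $w_i$ from the finiteness of $\N\backslash S$, the minimality argument giving $w_i-n\notin S$, and the closure argument ruling out $s=w_i+kn$ with $k\ge 1$ together establish both inclusions. The paper itself does not prove this lemma (it is quoted from the reference \cite{springer}), and your double-inclusion argument is precisely the standard proof given there, so there is nothing substantive to compare.
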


Moreover, the set $\Ap(S,n)$ completely determines $S$, since $S =
\langle \Ap(S,n) \cup \{n\} \rangle$ (see \cite{london}), and then, we can identify $S$ with its Ap\'ery set with respect to $n$. The set $\Ap(S,n)$ contains, in general, more information than an arbitrary system of
generators of $S$. For instance, Selmer in \cite{selmer77} gives the formulas,
$\g(S)=\frac{1}{n}\left(\sum_{w \in \Ap(S, n)} w\right) -
\frac{n-1}{2}$ and $\F(S) = \max(\Ap(S,n)) - n$. One can also test if a nonnegative integer $s$ belongs to $S$ by checking if $w_{s\pmod m} \leq s$.

We consider an useful modification of the Ap\'ery set that we call the \emph{Kunz-coordinates vector} as in \cite{bp2011}. Let $S$ be a numerical semigroup and $n \in S\backslash\{0\}$. By Lemma \ref{lem:11a}, $\Ap(S, n)=\{w_0=0, w_1, \ldots, w_{n-1}\}$, with $w_i$ congruent with $i$ modulo $n$. The \emph{Kunz-coordinates vector} of $S$ with respect to $n$ is the vector $\Kunz(S,n) = x \in \N^{n-1}$ with components $x_i = \frac{w_i-i}{n}$ for $i=1, \ldots, n-1$. We say that $x\in \N^{n-1}$ is a Kunz-coordinates vector with respect to $n$ if there exists a numerical semigroup whose Kunz-coordinates vector is $x$. If $x \in \N^{n-1}$ is a Kunz-coordinates vector, we denote by $S_x$ the numerical semigroup such that $\Kunz(S_x, n) = x$.

The Kunz-coordinates vectors were introduced in \cite{kunz} and have been previously analyzed when $n=\m(S)$ in \cite{bp2011,london}.

In the following result, whose proof is immediate from the results in the section above, it is shown how to get the multiplicity and the genus of a numerical semigroup with Frobenius number $F$ from its Kunz-coordinates vector with respect to $F+1$.
\begin{lem}
\label{lem:11}
Let $S$ be a numerical semigroup and $x=\Kunz(S, \F(S)+1) \in \N^F$. Then:
\begin{enumerate}
\item $ x \in \{0,1\}^{\F(S)}$,
\item $\G(S) = \{i \in \{1, \ldots, \F(S)\}: x_i =1\}$,
\item $\m(S)= \left\{\begin{array}{cl}
\min\{i: x_i=0\} & \mbox{if $\{i: x_i=0\} \neq \emptyset$},\\
\F(S)+1 & \mbox{otherwise}\end{array}\right.$,
\item $\g(S)= \dsum_{i=1}^F x_i$.
\end{enumerate}
\end{lem}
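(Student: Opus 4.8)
The plan is to unwind the definition of the Kunz-coordinates vector $\Kunz(S,\F(S)+1)$ using the description of the Ap\'ery set in Lemma \ref{lem:11a}, together with the single elementary fact that every integer strictly greater than $\F(S)$ belongs to $S$. Throughout, write $F=\F(S)$ and $n=F+1$. First I would note that $n\in S\backslash\{0\}$, since $n>F$, so that $\Ap(S,n)$, and hence $\Kunz(S,n)$, make sense; by Lemma \ref{lem:11a} we may then write $\Ap(S,n)=\{w_0=0,w_1,\ldots,w_{n-1}\}$ with $w_i$ the least element of $S$ congruent to $i$ modulo $n$.

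The heart of the argument is an observation made separately for each fixed $i\in\{1,\ldots,F\}$: the value $w_i$ is either $i$ or $i+n$. Indeed, if $i\in S$ then $w_i=i$ trivially; if $i\notin S$, then $i$ itself (being a gap) is excluded, but the next integer congruent to $i$ modulo $n$, namely $i+n=i+F+1$, is strictly larger than $F$ and therefore lies in $S$, so $w_i=i+n$. Hence $x_i=\frac{w_i-i}{n}\in\{0,1\}$, which is item (1); moreover, for $i\in\{1,\ldots,F\}$ we get $x_i=0$ exactly when $i\in S$ and $x_i=1$ exactly when $i\notin S$.

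The remaining items then follow quickly. Since every gap of $S$ lies in $\{1,\ldots,F\}$, item (2) becomes $\G(S)=\{i\in\{1,\ldots,F\}:i\notin S\}=\{i\in\{1,\ldots,F\}:x_i=1\}$. For item (3), recall that $\m(S)$ is the least positive element of $S$: if $S$ meets $\{1,\ldots,F\}$, the least such element is $\min\{i\in\{1,\ldots,F\}:x_i=0\}$, and otherwise every $i\in\{1,\ldots,F\}$ is a gap, which forces $S=\{0,F+1,\rightarrow\}$ and $\m(S)=F+1$. Finally, item (4) is immediate from item (2) and $x_i\in\{0,1\}$, since $\g(S)=\#\G(S)=\#\{i:x_i=1\}=\dsum_{i=1}^{F}x_i$; alternatively, one may substitute $w_i=i+nx_i$ into Selmer's formula $\g(S)=\frac{1}{n}\left(\dsum_{w\in\Ap(S,n)}w\right)-\frac{n-1}{2}$ and simplify.

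I do not expect a genuine obstacle here: the whole lemma rests on the single fact that the residues capable of being gaps are exactly $1,\ldots,F$ and that adding $n=F+1$ just once to any such residue already lands inside $S$. The only point that needs a moment's care is the empty-set branch of item (3), where one must check that $\{i:x_i=0\}=\emptyset$ genuinely forces $S=\{0,F+1,\rightarrow\}$, whose multiplicity is indeed $F+1$.
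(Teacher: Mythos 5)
Your proof is correct and fills in exactly the argument the paper treats as immediate: using Lemma \ref{lem:11a} and the fact that every integer exceeding $\F(S)$ lies in $S$, you show $w_i\in\{i,i+\F(S)+1\}$, so $x_i\in\{0,1\}$ records precisely whether $i$ is a gap, from which items (2)--(4) follow at once. No gaps; the empty-set branch of item (3) is handled correctly.
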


We denote by $\SG(S)=\{h \in \N \backslash S: S \cup \{h\} \text{ is a numerical semigroup}\}$ the set of special gaps of the numerical semigroup $S$. It is well-known (see \cite{springer}) that $S$ is irreducible if and only if $\#\SG(S)=1$. Furthermore, $\SG(S)$ can be computed from $x=\Kunz(S, \F(S)+1)$ by applying the characterization of $\SG(S)$ in terms of the Ap\'ery set that appears in \cite{jpaa}:
\begin{align*}
\SG(S) &= \big\{i \in \{1, \ldots, \F(S)\}:& &x_i =1,\\
&&& x_j \geq x_{i+j} \quad \text{, for all } j=i, \ldots, \F(S)-i \text{, and}\\
&&&x_{2i}=0 \text{ if $i < \frac{\F(S)}{2}$}\big\} 
\end{align*}

The construction of the tree of irreducible numerical semigroups with fixed Frobenius number in the section above consists of adding and removing certain elements to the semigroups to get the child of an irreducible numerical semigroup (Theorem \ref{theo:9}). The following lemma, whose proof is trivial, informs us about the translations of adding and removing elements of a numerical semigroups in its Kunz-coordinates vector. 
\begin{lem}
\label{lem:12}
Let $S$ be a numerical semigroup and $\e_i$ the $\F(S)$-tuple having a $1$ as its $i$th entry and zeros otherwise, for $i=1, \ldots, \F(S)$. Then:
\begin{enumerate}
\item If $h \in \SG(S)\backslash \{\F(S)\}$, then $\Kunz(S \cup \{h\}, \F(S)+1) = \Kunz(S, \F(S)+1) - \e_{h}$.
\item If $n <\F(S)+1$ is a minimal generator of $S$, then $\Kunz(S \backslash \{n\}, \F(S)+1) = \Kunz(S, \F(S)+1) + \e_{n}$.
\end{enumerate}
\end{lem}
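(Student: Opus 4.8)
The plan is to reduce both items to the following description of the Kunz-coordinates vector with respect to $\F(S)+1$, which is essentially the content of Lemma \ref{lem:11}: for \emph{any} numerical semigroup $T$ with $\F(T)=F$ and any $i\in\{1,\dots,F\}$, one has $\Kunz(T,F+1)_i=1$ exactly when $i\notin T$, and $\Kunz(T,F+1)_i=0$ exactly when $i\in T$. Indeed $F+1\in T$ and $F+1>F$, so the element $w_i$ of $\Ap(T,F+1)$ congruent to $i$ modulo $F+1$ is $i$ itself when $i\in T$ (so the $i$-th coordinate is $0$), and is $i+(F+1)$ otherwise, since every integer larger than $F$ belongs to $T$ (so the $i$-th coordinate is $1$). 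In other words, $\Kunz(T,F+1)$ is precisely the characteristic vector of $\G(T)\subseteq\{1,\dots,F\}$. Once this is in hand, each item amounts to checking which single coordinate changes.

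For item (1), I would first verify that $S\cup\{h\}$ is a numerical semigroup with the same Frobenius number as $S$. That it is a numerical semigroup is the defining property of $h\in\SG(S)$; and since $h\in\G(S)\setminus\{\F(S)\}$ we have $h<\F(S)$, so $\F(S)\notin S\cup\{h\}$ and hence $\F(S\cup\{h\})=\F(S)$. In particular $\F(S)+1\in S\cup\{h\}$, so $\Kunz(S\cup\{h\},\F(S)+1)$ lies in the same $\N^{\F(S)}$ as $\Kunz(S,\F(S)+1)$. Now $\G(S\cup\{h\})=\G(S)\setminus\{h\}$, so by the description above the two characteristic vectors agree in every coordinate except the $h$-th, where the one for $S$ is $1$ and the one for $S\cup\{h\}$ is $0$; this is exactly $\Kunz(S\cup\{h\},\F(S)+1)=\Kunz(S,\F(S)+1)-\e_h$.

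For item (2) the argument is symmetric. By Lemma \ref{lem:1}, $S\setminus\{n\}$ is a numerical semigroup. Since $n\in S$ we have $n\neq\F(S)$, and as $n<\F(S)+1$ in fact $n<\F(S)$, so $\F(S)\notin S\setminus\{n\}$ and $\F(S\setminus\{n\})=\F(S)$; moreover $\F(S)+1\neq n$, so $\F(S)+1\in S\setminus\{n\}$. Then $\G(S\setminus\{n\})=\G(S)\cup\{n\}$, so the characteristic vectors agree in every coordinate except the $n$-th, where the one for $S$ is $0$ and the one for $S\setminus\{n\}$ is $1$, giving $\Kunz(S\setminus\{n\},\F(S)+1)=\Kunz(S,\F(S)+1)+\e_n$.

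There is no real obstacle here; the only point requiring a little care is the bookkeeping that both modified semigroups keep Frobenius number $\F(S)$ and contain $\F(S)+1$, so that their Kunz-coordinates vectors (with respect to $\F(S)+1$) live in the same space and can be compared coordinatewise with $\Kunz(S,\F(S)+1)$. Everything else is immediate from the observation that, for semigroups with a fixed Frobenius number $F$, the Kunz-coordinates vector with respect to $F+1$ is just the characteristic vector of the gap set.
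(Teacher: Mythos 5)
Your proof is correct and is exactly the argument the paper has in mind: the paper labels Lemma \ref{lem:12} as having a trivial proof precisely because, with respect to $\F(S)+1$, the Kunz-coordinates vector is the characteristic vector of the gap set (as in Lemma \ref{lem:11}), and adding a special gap $h\neq\F(S)$ or removing a minimal generator $n<\F(S)+1$ changes only that one coordinate while preserving the Frobenius number. Your bookkeeping of these points fills in the details the paper leaves implicit, with no deviation in approach.
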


In the section above we proved that there is a numerical semigroup of special importance for the construction of the tree $\G(\I(F))$, $\C(F)$. The following result shows the Kunz-coordinates vector of this semigroup with respect to $F+1$.
\begin{lem}
\label{lem:14}
Let $F$ be a positive integer. Then, $\Kunz(\C(F), F+1) = (\overbrace{1, \ldots, 1}^{\lceil \frac{F+1}{2}\rceil -1}, \overbrace{0, \ldots, 0}^{F-\lceil\frac{F+1}{2}\rceil}, 1) \in \N^F$.
\end{lem}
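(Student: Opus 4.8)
The plan is to read off the coordinates of $x = \Kunz(\C(F), F+1)$ directly from the gap set of $\C(F)$, using Lemma \ref{lem:11}. By parts (1) and (2) of that lemma, $x \in \{0,1\}^F$ and, for $i \in \{1,\dots,F\}$, we have $x_i = 1$ if and only if $i \in \G(\C(F))$, i.e.\ $i$ is a gap of $\C(F)$. So the whole task reduces to computing $\G(\C(F)) \cap \{1,\dots,F\}$.

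Next I would collapse the two parity cases of Proposition \ref{prop:7} into a single statement. When $F$ is odd, $\frac{F+1}{2}$ is an integer and equals $\lceil \frac{F+1}{2}\rceil$; when $F$ is even, $\frac{F}{2}+1$ is an integer and again equals $\lceil \frac{F+1}{2}\rceil$. Hence in both cases the description of $\C(F)$ in Proposition \ref{prop:7}, together with $F = \F(\C(F)) \notin \C(F)$, gives
$$
\C(F) \cap \{1,\dots,F\} \;=\; \left\{\left\lceil \tfrac{F+1}{2}\right\rceil,\ \left\lceil \tfrac{F+1}{2}\right\rceil + 1,\ \dots,\ F-1\right\}.
$$
Taking complements inside $\{1,\dots,F\}$, the gaps of $\C(F)$ in that range are exactly $\{1,\dots,\lceil \frac{F+1}{2}\rceil - 1\} \cup \{F\}$.

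Finally I would translate this back into the vector: by Lemma \ref{lem:11}(2), $x_i = 1$ for $1 \le i \le \lceil\frac{F+1}{2}\rceil - 1$, then $x_i = 0$ for $\lceil\frac{F+1}{2}\rceil \le i \le F-1$, and $x_F = 1$. The first block has length $\lceil\frac{F+1}{2}\rceil - 1$, the middle block of zeros has length $(F-1) - \lceil\frac{F+1}{2}\rceil + 1 = F - \lceil\frac{F+1}{2}\rceil$, and together with the final entry these lengths sum to $F$; this is precisely the asserted vector $(\overbrace{1,\dots,1}^{\lceil \frac{F+1}{2}\rceil -1}, \overbrace{0,\dots,0}^{F-\lceil\frac{F+1}{2}\rceil}, 1)$. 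There is no genuine obstacle here; the only point needing a touch of care is the bookkeeping with the ceiling function to verify that the two cases of Proposition \ref{prop:7} really do give the uniform formula for $\C(F) \cap \{1,\dots,F\}$ and that the block lengths add up correctly.
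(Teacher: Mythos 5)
Your proof is correct: the paper states Lemma \ref{lem:14} without proof, treating it as an immediate consequence of the description of $\C(F)$ in Proposition \ref{prop:7} and of Lemma \ref{lem:11}, and your argument (unifying the two parity cases via the ceiling, computing the gaps $\{1,\dots,\lceil\frac{F+1}{2}\rceil-1\}\cup\{F\}$, and reading off the $0$--$1$ coordinates) is exactly that intended verification. Nothing is missing; the block-length bookkeeping you check is the only point of care.
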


For any positive integer $F$, we define the directed graph $\G^K(\I(F))$ as the translation of $\G(\I(F))$ in terms of the Kunz-coordinates vectors with respect to $F+1$. The vertices of $\G^K(\I(F))$ is the set $\{\Kunz(S, F+1): S \in \I(F)\}$, and $(x,y)$ is an edge of $\G^K(\I(F))$ if $m =\min\{i: x_i=0\} < \frac{F}{2}$ and $y = x + \e_m - \e_{F-m}$ (Lemma \ref{lem:12}).

Now, we proceed to translate the conditions of Theorem \ref{theo:9} in terms of the Kunz-coordinates vector.
\begin{theo}
\label{theo:15}
Let $F$ be a positive integer. The directed graph $\G^K(\I(F))$ is a tree with root $\hat{x}=(\overbrace{1, \ldots, 1}^{\lceil \frac{F+1}{2}\rceil -1}, \overbrace{0, \ldots, 0}^{F-\lceil\frac{F+1}{2}\rceil}, 1)$. Moreover, if $x$ is the Kunz-coordinates vector with respect to $F$ of an irreducible numerical semigroup with Frobenius number $F$, then, the children of $x$ in $\G^K(\I(F))$ are $x+\e_{n_1}-\e_{F-n_1}, \ldots, x+\e_{n_r}-\e_{F-n_r}$, where $\{n_1, \ldots, n_r\}$  verifies:
\begin{enumerate}
\item $x_{n_i}=0$, for all $i=1, \ldots, r$,
\item $x_k + x_j \geq 1$ if $k+j=n_i$, for all $i=1, \ldots, r$,
\item $\frac{F}{2} < n_i < F$, for all $i=1, \ldots, r$,
\item $x_{2n_i-F} = 1$, for all $i=1, \ldots, r$,
\item $3n_i \neq 2F$, for all $i=1, \ldots, r$,
\item $4n_i \neq 3F$, for all $i=1, \ldots, r$,
\item $n_i > F - \min\{j: x_j=0\}$.
\end{enumerate}
\end{theo}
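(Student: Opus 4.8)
The plan is to obtain Theorem~\ref{theo:15} as a straightforward dictionary translation of Theorem~\ref{theo:9}, using Lemma~\ref{lem:11}, Lemma~\ref{lem:12} and Lemma~\ref{lem:14} to convert every statement about the semigroup $S$ and its minimal generators into a statement about the $0$--$1$ vector $x=\Kunz(S,F+1)$. First I would note that the claim that $\G^K(\I(F))$ is a tree with the indicated root is immediate: $\G^K(\I(F))$ is by definition the image of $\G(\I(F))$ under the bijection $S\mapsto \Kunz(S,F+1)$ (which is a bijection by the discussion after Lemma~\ref{lem:11a} together with Lemma~\ref{lem:11}), this bijection sends edges to edges by Lemma~\ref{lem:12}(2) applied to $n=\m(T)$, and by Lemma~\ref{lem:14} it sends the root $\C(F)$ to $\hat x$. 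Hence $\G^K(\I(F))$ is isomorphic as a directed graph to $\G(\I(F))$, which is a tree with root $\C(F)$ by Theorem~\ref{theo:9}.

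Next I would identify the children. By Theorem~\ref{theo:9} the children of $S$ are exactly $(S\setminus\{y\})\cup\{F-y\}$ as $y$ runs over the minimal generators of $S$ satisfying conditions \eqref{theo9:c1}--\eqref{theo9:c5}; under the bijection and Lemma~\ref{lem:12}(2)--(1) this semigroup corresponds to $\Kunz(S,F+1)+\e_y-\e_{F-y}$, since $F-y\in\SG(S)\setminus\{F\}$ whenever $y$ is such a generator (it is the multiplicity of the child, hence a special gap). So setting $n_i=y$, the children of $x$ are exactly the $x+\e_{n_i}-\e_{F-n_i}$, and it remains to check that the numerical conditions $(1)$--$(7)$ of Theorem~\ref{theo:15} on $n_i$ are equivalent, given $x=\Kunz(S,F+1)$, to ``$n_i$ is a minimal generator of $S$ satisfying \eqref{theo9:c1}--\eqref{theo9:c5}''.

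The core of the proof is therefore the list of equivalences, carried out condition by condition. I would argue: (i) $n$ is a \emph{generator} of $S$ with $n<F+1$ iff $n\notin S$, i.e.\ iff $x_n=1$ — wait, more carefully, $n\in\G(S)$ iff $x_n=1$ by Lemma~\ref{lem:11}(2); but we want $n$ to be a minimal generator, equivalently $n\notin S$ and $n$ cannot be written as a sum of two smaller nonzero elements of $S$. The condition $x_n=0$ in Theorem~\ref{theo:15}(1) together with (2) is what encodes this: recalling that $j\in S$ iff $x_j=0$ (for $1\le j\le F$; and $0\in S$), the vector entry $x_n=0$ says $n\in S$. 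Hmm — so in fact I should read (1)+(2) as saying $n$ is a minimal generator of $S$, where $x_n=0$ means $n\in S$ and $x_k+x_j\ge1$ for $k+j=n$ means we never have both $k,j\in S$, i.e.\ $n$ is not decomposable; this is the Kunz translation of minimal generation. (ii) Condition (3), $\frac F2<n<F$, is \eqref{theo9:c1} combined with the $x<F/2$-to-$x>F/2$ swap already reflected in the definition of $\G^K$ (Theorem~\ref{theo:9} states children come from generators $x$ with $F/2<x<F$). (iii) Condition (4), $x_{2n-F}=1$, says $2n-F\in\G(S)$, i.e.\ $2n-F\notin S$, which is exactly \eqref{theo9:c2}; here one must note $2n-F\ge1$ because $n>F/2$, so the index is legitimate, and $2n-F\le F$ because $n<F$. (iv)--(v) Conditions (5) and (6) are literally \eqref{theo9:c3} and \eqref{theo9:c4}. (vi) Condition (7), $n>F-\min\{j:x_j=0\}$, unwinds via Lemma~\ref{lem:11}(3): $\min\{j:x_j=0\}=\m(S)$ (using $n<F+1$ so the set is nonempty), so (7) reads $n>F-\m(S)$, i.e.\ $F-n<\m(S)$, which is \eqref{theo9:c5}.

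The main obstacle — really the only place demanding care rather than bookkeeping — is justifying the equivalence in step (i): that, for $x=\Kunz(S,F+1)$, the combinatorial conditions $x_n=0$ and ``$x_k+x_j\ge1$ whenever $k+j=n$'' together characterize $n$ being a \emph{minimal} generator of $S$ among integers $<F+1$. I would handle it by recalling $t\in S\iff (t=0$ or $1\le t\le F$ with $x_t=0$ or $t>F)$, so for $n\le F$, $n\in S\iff x_n=0$; and $n\in S$ fails to be a minimal generator iff $n=k+j$ for some $k,j\in S\setminus\{0\}$ with $k,j<n$, i.e.\ iff there exist $1\le k,j$ with $k+j=n$ and $x_k=x_j=0$, which is the negation of condition (2). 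One small subtlety: condition (2) as written ranges over all decompositions $k+j=n$ with $k,j\ge1$, and $x$ is only indexed $1,\dots,F$, so since $n\le F$ all such $k,j$ lie in range — no edge effect. With this the equivalence is complete and the theorem follows by assembling the seven equivalences with the isomorphism $\G^K(\I(F))\cong\G(\I(F))$ and Theorem~\ref{theo:9}.
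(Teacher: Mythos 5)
Your proposal is correct and takes essentially the same route as the paper, whose proof is just the terse observation that conditions (1)--(2) say ``$n_i$ is a minimal generator of $S_x$ smaller than $F$'' and that (3)--(7) are the translations of conditions \eqref{theo9:c1}--\eqref{theo9:c5} of Theorem~\ref{theo:9}; you merely make the dictionary (via Lemmas \ref{lem:11}, \ref{lem:12} and \ref{lem:14}) explicit. One small slip worth fixing: $F-y$ is a special gap of $S\setminus\{y\}$ (which is the semigroup to which Lemma \ref{lem:12}(1) must be applied, and this membership follows from Proposition \ref{prop:5}), not of $S$ itself --- indeed $y+(F-y)=F\notin S\cup\{F-y\}$, so $F-y\notin\SG(S)$ in general --- but this does not affect the argument.
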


\begin{proof}
Note that conditions $(1)$ and $(2)$ are equivalent to say that $n_i$ is a minimal generator smaller than $F$ of $S_x$. 

Items $(3)-(7)$ are the translations of conditions $\eqref{theo9:c1}-\eqref{theo9:c5}$ in Theorem \ref{theo:9}. 
\end{proof}
Note that the satisfaction of conditions $(1)-(7)$ in Theorem \ref{theo:15} is checked by analyzing only the connections between the $F$ components of a $0-1$ vector. From a computational viewpoint this is much easier than checking conditions \eqref{theo9:c1}-\eqref{theo9:c5} in Theorem \ref{theo:9}. Observe also that $x + \e_{n} - e_{F-n}$ in our case ($x \in \{0,1\}^F$, $n$ a minimal generator and $F-n$ a special gap of $S_x$) consists of swapping $x_n$ with $x_{F-n}$. Hence, the computational effort is small at each step.

Other advantage of this approach is that in general, the computation of the Ap\'ery sets and the Kunz-coordinates vector of a numerical semigroup is hard. However, by the above theorem we do not need to compute the Kunz-coordinates vectors of the semigroups appearing in the tree $\G^K(\I(F))$ since by starting with $(\overbrace{1, \ldots, 1}^{\lceil \frac{F+1}{2}\rceil -1}, \overbrace{0, \ldots, 0}^{F-\lceil\frac{F+1}{2}\rceil}, 1)$ and manipulating adequately the $0-1$ vectors representing these semigroups we are assured to get Kunz-coordinates vectors of irreducible numerical semigroups with Frobenius number $F$. 

A vertex in $\G^K(\I))$ has at most $\frac{F}{2}$ children (the number of possible swappings when the parent has $\frac{F}{2}$ ones and $\frac{F}{2}$ zeroes). Then, the complexity of each step (building the children of a vertex) has computational complexity $O(F^2\; \delta(F))$ where $\delta(F)$ is the computational complexity of checking the conditions in Theorem \ref{theo:15}, which is clearly a polynomial in $F$. Also, the height of the tree is bounded above by $\frac{F}{2}$. Hence, computing the tree $\G^K(\I(F))$ is polynomial-time doable.
 
Note that the tree structure of the set of irreducible numerical semigroups with a fixed Frobenius number is useful when one is interested in finding irreducible numerical semigroups with some extra properties. For instance, this representation is useful when searching for the set of irreducible oversemigroups of a given numerical semigroup with a fixed Frobenius number or those that also has a fixed multiplicity (see \cite{bp2011}). 

The following example illustrates the construction of the tree $\G^K(\I(F))$ in the same case than in Example \ref{ex:10}.
\begin{ex}
\label{ex:16}
Let us compute the whole set of irreducible numerical semigroups with Frobenius number $F=11$ by using Kunz-coordinates. In this case $\lceil \frac{F+1}{2} \rceil = 6$. First we star with $x=(1,1,1,1,1, 0,0,0,0,0,1)$ which is the root of the tree $\G^K(\I(11))$. The set of indices verifying conditions of Theorem \ref{theo:15} is $\{6,7,8\}$. Note that to get this set, first we look for the components in $x$ that are equal to zero (Condition $(1)$) ,which are $\{6,7,8,9,10\}$; then we check for those elements that verify conditions $(3)$ and $(7)$ (no elements are removed in this step). From those we filter by those that verify Condition $(4)$ ($9$ and $10$ are removed from the set since ${x}_{7}={x}_9=0$). Finally we check for the rest of conditions, which are in this case verified by all the remainder elements. Therefore, the children of ${x}$ are $x^1={x}+\e_8-\e_3, x^2= {x}+\e_7-\e_4$ and $x^3={x}+\e_6-\e_5$. Then, the first part of the tree is:
{\small$$
\xymatrix{ & (1,1,1,1,1, 0,0,0,0,0,1) & \\
 (1,1,\mathbf{0},1,1, 0,0,\mathbf{1},0,0,1)\ar[ru] & (1,1,1,\mathbf{0},1, 0,\mathbf{1},0,0,0,1) \ar[u]& (1,1,1,1,\mathbf{0}, \mathbf{1},0,0,0,0,1) \ar[lu]}
$$}
The elements which are swapped to get each child are marked in boldface.

Now, we  compute the children of $x^1, x^2$ and $x^3$. By an analogous procedure we get that $x^1$ has no children, for $x^2$ the unique index verifying the conditions of Theorem \ref{theo:15} is $n_1=9$, being the unique child $x^4=x^2 +\e_9 - \e_2$. Also, $x^3$ has only a child $x^5=x^3 + \e_7-\e_4$ which comes from the unique index that verifies the conditions, $n_1=7$. Then, the continuation of the tree is:
{\small$$
\xymatrix{ & (1,1,1,1,1, 0,0,0,0,0,1) & \\
 (1,1,0,1,1, 0,0,1,0,0,1)\ar[ru] & (1,1,1,0,1, 0,1,0,0,0,1) \ar[u]& (1,1,1,1,0, 1,0,0,0,0,1) \ar[lu]\\
 & \ar[u] (1,\mathbf{0},1,0,1, 0,1,0,\mathbf{1},0,1) & (1,1,1,\mathbf{0},0, 1,\mathbf{1},0,0,0,1) \ar[u] }
$$}
Finally, we see that $x^4$ and $x^5$ have no children, so the tree $\G^K(\I(11))$ is completed.
\end{ex}

\end{document}